\documentclass[a4paper]{amsart}
\setlength{\vfuzz}{2mm} 
\setlength{\textwidth}{160mm}
\setlength{\textheight}{205mm} 
\setlength{\oddsidemargin}{0pt}
\setlength{\evensidemargin}{0pt}


\usepackage{amsfonts}
\usepackage{amssymb}
\usepackage{graphicx}    
\usepackage{multicol}    

\usepackage{amsmath}
\usepackage[hidelinks]{hyperref}
\usepackage{tcolorbox}
\usepackage{mathrsfs}
\usepackage{amsthm}
\usepackage{lastpage}
\usepackage{fancyhdr}
\usepackage{accents}
\usepackage{stmaryrd}
\usepackage{tabularx}
\usepackage{tikz}
\usepackage{tikz-cd}
\usepackage{array}
\usepackage{mathtools}
\usepackage{comment}
\usepackage{subcaption}
\usepackage{adjustbox}
\usepackage{quiver}
\usepackage{colortbl}
\usepackage{pifont}
\usepackage{nicematrix}

\DeclareFontFamily{OT1}{pzc}{}
\DeclareFontShape{OT1}{pzc}{m}{it}{<-> s * [1.10] pzcmi7t}{}
\DeclareMathAlphabet{\mathpzc}{OT1}{pzc}{m}{it}
\newtheorem{thm}{Theorem}[section]
\newtheorem{prop}[thm]{Proposition}
\newtheorem{cor}[thm]{Corollary}
\newtheorem{lem}[thm]{Lemma}

\newtheorem*{thma}{Theorem \ref{Truemain}}
\newtheorem*{thmb}{Theorem \ref{poly}}
\theoremstyle{definition}
\newtheorem{defin}[thm]{Definition}
\newtheorem{construction}[thm]{Construction}
\newtheorem{exm}[thm]{Example}

\newtheorem{note}[thm]{Note}

\def\Z{\mathbb{Z}}    

\def\K{\mathcal{K}}
\def\H{\tilde{H}}
\def\A{\mathcal{A}}

\def\ZZ{\mathcal{Z}}

\def\Z{\mathbb{Z}}

\def\ker{\text{ker }}
\def\im{\text{Im }}

\def\F{\mathbb{F}}



\usepackage{etoolbox}
\makeatletter
\patchcmd{\@maketitle}
  {\ifx\@empty\@dedicatory}
  {\ifx\@empty\@date \else {\vskip3ex \centering\footnotesize\@date\par\vskip1ex}\fi
   \ifx\@empty\@dedicatory}
  {}{}
\patchcmd{\@adminfootnotes}
  {\ifx\@empty\@date\else \@footnotetext{\@setdate}\fi}
  {}{}{}
\makeatother

\providecommand{\customgenericname}{}
\newcommand{\newcustomtheorem}[2]{%
  \newenvironment{#1}[1]
  {%
   \renewcommand\customgenericname{#2}%
   \renewcommand\theinnercustomgeneric{##1}%
   \innercustomgeneric
  }
  {\endinnercustomgeneric}
}\newcustomtheorem{customprop}{Proposition}
\newcustomtheorem{customthm}{\textbf{Theorem}}


\begin{document}

\title{Double, über and poset homology}

\author{Carlos Gabriel Valenzuela Ruiz}

\date{\today}

\maketitle
\thispagestyle{empty}
\begin{abstract}
   We present a comparison map between the überhomology of a simplicial complex and the double homology of its associated moment-angle complex. We show these two homology theories differ at three bidegrees, which depend on whether the complex $\K$ is neighbourly or not.
\end{abstract}

\maketitle
\setcounter{tocdepth}{1}
\tableofcontents
\section{Introduction}
   In \cite{uber}, the authors discussed a comparison between two new homological constructions for finite simplicial complexes (see Definition \ref{simplicialcomplex}). In the article, they showed that these two constructions are isomorphic almost everywhere, and further, they suggested that this connection goes significantly deeper as both theories appear as a second page of some spectral sequence.\\ 
   
   \indent The first construction is the \textit{double homology of a moment-angle complex}. Given a simplicial complex $\K$ on $[m]$, we can construct its associated \textit{moment-angle complex} $\ZZ_\K$, which is a subspace of the polydisk ${\left(D^2\right)}^m$ that encodes the combinatorial structure of $\K$ (see \cite[Ch. 4]{ToricTop}). In \cite{Limonchenko_2023}, the authors defined a cochain complex structure on $H_*(\ZZ_\K)$; taking cohomology gives us the bigraded double homology of the complex, $DH_{*,*}(\ZZ_\K)$ (originally denoted by $HH_{*,*}(\ZZ_\K)$). This construction was designed to solve a stability problem when using Tor complexes in topological data analysis (see \cite{Torapps} and \cite{stab}). The idea behind this approach is that it offers a global perspective on a persistent diagram as it observes the evolution of every component $\K_J$ of the complex $\K$ obtained by restricting to every subset $J$ of the vertex set (see Definition \ref{Subcomplex}).\\
   
   \indent The second construction was designed to categorify the total domination problem in graphs (see \cite{catuber}). The author of \cite{Celoria} designed a triply-graded cohomology theory for simplicial complexes $\K$ called \textit{überhomology} $\ddot{H}_{*,*}^*(\K)$. This homology theory is obtained by considering bicolourings of the vertices of $\K$, which induce different filtrations on $H(\K)$; these filtrations allow us to construct a family of complexes whose homology gives us überhomology. For our purposes, we mostly care about the uberhomology of the lowest degree: The $0$\textit{-degree überhomology} is the bigraded module $\ddot{B}_{*}^*(\K):=\ddot{H}_{0,*}^*(\K)$.\\
   
   It is no coincidence that these two homology theories are related, as both are special cases of a more general homology theory. Given a poset $P$, we can consider it as a category (see Definition \ref{posetcat}); thus, we can consider the category of functors $P\to \A$ to some abelian category $\A$. The author of \cite{chandler}, inspired by Khovanov cohomology, constructed a theory of \textit{poset cohomology} for a special case of such functors.\\
   
   Double and überhomology are special cases of poset cohomology in the sense that 
      \[DH_{-k,2l}(\ZZ_\K)\cong H^l(\H_{l-k-1}(\K_-))\]
   \[\ddot{B}_q^l(\ZZ_\K)\cong H^l(H_q(\K_-)),\]
    where $H_{q}(\K_-):2^{[m]}\to \text{Ab}$ assigns each subset $J\subseteq[m]$ to $H_q(\K_J)$, and inclusions $J\subseteq L$ to the induced map in homology of the inclusion $\K_J\subseteq \K_L$; the functor $\H_{*}(\K_-):2^{[m]}\to Ab$ is defined analogously. The first isomorphism follows from the definition of $DH$; the second one was observed by the authors in \cite{uber} after Definition 1.3. The authors of the aforementioned article showed that uberhomology and double homology coincide in most bidegrees. Our main result completes this comparison by describing a map between them: 

   \begin{thma}
       Let $\K$ be a simplicial complex on $[m]$. There are maps $\phi_{l,q}:H^l(\H_q(\K_-)) \to H^l(H_q(\K_-))$ for every $q,l\geq 0$ which is an isomorphism whenever $l>2$ or $q>0$, furthermore, we have the following exact sequence 
       \begin{equation}
           0\to H^1(H_0(\K_-))\xrightarrow[]{}\Z\to H^2(\H_0(\K_-))\xrightarrow{\phi_{2,0}} H^2(H_0(\K_-))\to0 \tag{\ding{72}}       \end{equation}
       where $\phi_{2,0}$ has a section.
   \end{thma}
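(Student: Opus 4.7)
The plan is to deduce everything from the short exact sequence of functors on $2^{[m]}$,
\[0 \to \H_q(\K_-) \to H_q(\K_-) \to F_q \to 0,\]
where $F_q := H_q(\K_-)/\H_q(\K_-)$ and $\phi_{l,q}$ is the map induced in poset cohomology by the inclusion. For $q\geq 1$, reduced and unreduced simplicial homology coincide on each $\K_J$, so $F_q=0$ and $\phi_{l,q}$ is an isomorphism; this disposes of every bidegree with $q>0$.

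For $q=0$, and assuming for simplicity that $V(\K)=[m]$, the cokernel $F_0$ is the functor sending $\emptyset$ to $0$ and any non-empty $J$ to $\Z$, with identity transitions between these copies of $\Z$. A direct computation (or, equivalently, use of the further short exact sequence $0 \to F_0 \to \underline{\Z} \to \underline{\Z}_\emptyset \to 0$, where $\underline{\Z}$ is the constant functor with vanishing poset cohomology and $\underline{\Z}_\emptyset$ is the skyscraper at $\emptyset$) shows $H^1(F_0)=\Z$ and $H^k(F_0)=0$ for $k\neq 1$. Feeding this into the long exact sequence of the main SES of functors gives $\phi_{l,0}$ an isomorphism for $l\geq 3$, and leaves the five-term segment
\[0 \to H^1(\H_0(\K_-)) \to H^1(H_0(\K_-)) \to \Z \to H^2(\H_0(\K_-)) \xrightarrow{\phi_{2,0}} H^2(H_0(\K_-)) \to 0.\]
Since $C^1(\H_0(\K_-))=\bigoplus_{i\in[m]}\H_0(\K_{\{i\}})=0$ (reduced $H_0$ of a point vanishes), $H^1(\H_0(\K_-))=0$ and the exact sequence of the theorem follows.

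The remaining step is to produce a section of $\phi_{2,0}$. I would identify the connecting map $\delta\colon\Z\to H^2(\H_0(\K_-))$ at the cochain level: lifting the generator $(1,\ldots,1)\in C^1(F_0)$ to $\tilde{x}=([v_1],\ldots,[v_m])\in C^1(H_0(\K_-))$ and computing $d\tilde{x}$, one sees that the contributions at an edge $\{i,j\}\in\K$ vanish (because $[v_i]=[v_j]$ in $H_0(\K_{\{i,j\}})$), while at a non-edge $\{i,j\}\notin\K$ they equal $\pm([v_i]-[v_j])$, a $\pm 1$-multiple of the standard generator of $\H_0(\K_{\{i,j\}})\cong\Z$. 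Hence $\delta(1)$, viewed in $C^2(\H_0(\K_-))=\bigoplus_{\{i,j\}\notin\K}\Z$, is a vector with every entry equal to $\pm 1$, so primitive in this free abelian group, and therefore also primitive in the free subgroup $H^2(\H_0(\K_-))=\ker d^2$. Consequently $\Z\cdot\delta(1)$ is a direct summand of $H^2(\H_0(\K_-))$, which splits the short exact sequence
\[0 \to \mathrm{image}(\delta) \to H^2(\H_0(\K_-)) \xrightarrow{\phi_{2,0}} H^2(H_0(\K_-)) \to 0\]
extracted from the long exact sequence, providing the required section. The delicate point I expect is the sign bookkeeping in the identification of $\delta(1)$ and the check that the dichotomy neighbourly-versus-not governs whether $\delta$ is zero or injective, matching the picture of bidegrees of difference described in the abstract.
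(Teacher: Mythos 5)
Your proposal is correct, and for the bulk of the theorem it follows the same route as the paper: the short exact sequence of functors $0\to\H_0(\K_-)\to H_0(\K_-)\to F_0\to 0$, where your $F_0$ is exactly the paper's functor $A$; the computation that $H^*(F_0)$ is $\Z$ concentrated in degree $1$ via the acyclic constant functor; and the resulting long exact sequence. Your explicit observation that $C^1(\H_0(\K_-))=\bigoplus_i\H_0(\K_{\{i\}})=0$, hence $H^1(\H_0(\K_-))=0$, is the step the paper leaves implicit when it truncates the five-term segment to the stated four-term sequence. Where you genuinely diverge is the section of $\phi_{2,0}$: the paper asserts in one line that it is obtained by assembling the splittings of the inclusions $\H_0(\K_J)\hookrightarrow H_0(\K_J)$, whereas you compute the connecting map on cochains, identify $\delta(1)$ with the vector $\sum_{\{i,j\}\notin\K}\pm g_{ij}\in C^2(\H_0(\K_-))$, and argue that a primitive element of a finitely generated free abelian group generates a direct summand of any subgroup containing it (the quotient being torsion-free), applied to $H^2(\H_0(\K_-))=\ker d^2$, which again uses $C^1(\H_0(\K_-))=0$. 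This argument is sound and is arguably more robust than the paper's: a family of retractions $H_0(\K_J)\to\H_0(\K_J)$ that were actually natural in $J$ would make $\phi_{2,0}$ split \emph{injective}, which is impossible when $\K$ is not neighbourly since $\ker\phi_{2,0}\cong\Z$; so a cochain-level verification like yours is really what is needed. The only point to flag is the degenerate neighbourly case, where there are no non-edges, $C^2(\H_0(\K_-))=0$ and $\delta=0$; the splitting is then trivial, but your phrase about a vector with every entry $\pm1$ being primitive should be understood as applying only when at least one non-edge exists.
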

    The comparison that Theorem \ref{Truemain} presents is even more complete than it suggests, and we can see that by specializing to neighbourly complexes (see definition \ref{nei}). We present this complete comparison with field coefficients using the bigraded poincaré polynomial of the functors (see definition \ref{poly}).
   \begin{thmb}
    Let $\K$ be a simplicial complex on $[m]$ and $\F$ a field, then 
    \[\left(P(\H_*(\K_-;\F))-P(H_*(\K_-;\F))\right)(x,y)=\left\{\begin{array}{cl}
         x^{-1}-y& \text{if $\K$ is neighbourly},  \\
         x^{-1}+y^2&  \text{otherwise.}
    \end{array}\right.\]
   \end{thmb}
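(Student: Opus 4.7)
My plan is to use Theorem \ref{Truemain} to pin down where the two Poincar\'e polynomials can disagree and then compute that disagreement directly from the poset cochain complex. Since $\H_q(\K_-;\F)=H_q(\K_-;\F)$ for every $q\geq 1$, the difference polynomial is supported in $q\in\{-1,0\}$. At $q=-1$, $H_{-1}(\K_-;\F)$ is the zero functor while $\H_{-1}(\K_-;\F)$ sends $J$ to $\F$ when $\K_J=\emptyset$ and to $0$ otherwise; assuming $V(\K)=[m]$, this functor is concentrated at $J=\emptyset$ and its cochain complex is concentrated in degree $0$, giving $H^0=\F$ and $H^l=0$ for $l\geq 1$. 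This contributes exactly $x^{-1}$ to the difference.

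For the $q=0$ contribution I would feed Theorem \ref{Truemain} over $\F$, obtaining
\[
0\to H^1(H_0(\K_-;\F))\to\F\to H^2(\H_0(\K_-;\F))\xrightarrow{\phi_{2,0}}H^2(H_0(\K_-;\F))\to 0
\]
with $\phi_{2,0}$ split. Taking alternating dimensions gives $\dim H^2(\H_0)-\dim H^2(H_0)=1-\dim H^1(H_0)$. On the other hand, the cochain complex computing $H^*(\H_0(\K_-;\F))$ satisfies $C^0=\H_0(\K_\emptyset;\F)=0$ and $C^1=\bigoplus_i\H_0(\K_{\{i\}};\F)=0$, so $H^1(\H_0(\K_-;\F))=0$. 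Combined with the isomorphism $\phi_{0,0}$ (obtained by extending the exact sequence one step to the left) and the isomorphism clause of Theorem \ref{Truemain} for $l\geq 3$, the $q=0$ part of the difference reduces to $-\dim H^1(H_0)\,y+(1-\dim H^1(H_0))\,y^2$.

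To finish I would compute $\ker d^1$ on $C^1(H_0(\K_-;\F))=\F^m$ directly. An edge $\{i,j\}\in\K$ imposes the equation $x_i=x_j$, since both structure maps $\F\to H_0(\K_{\{i,j\}};\F)=\F$ are identities; a non-edge $\{i,j\}$ imposes $x_i=x_j=0$, since the two structure maps $\F\to\F^2$ land in distinct components of $\K_{\{i,j\}}$. If $\K$ is neighbourly the kernel is the diagonal $\F$; if any non-edge $\{i,j\}$ exists, then for every vertex $k$ the pair $\{k,i\}$ is either an edge (forcing $x_k=x_i=0$) or a non-edge (forcing $x_k=0$ directly), making $\ker d^1=0$. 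This yields $-y$ in the neighbourly case and $y^2$ otherwise; adding $x^{-1}$ from the $q=-1$ step recovers the two claimed identities. The main technical subtleties will be extending the exact sequence of Theorem \ref{Truemain} to $l=0$ (to rule out a $y^0$ contribution) and carefully tracking the splitting of $\phi_{2,0}$ in the dimension count.
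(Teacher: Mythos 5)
Your proposal is correct and follows essentially the same route as the paper: the difference is supported in $q\in\{-1,0\}$, the $q=-1$ part contributes $x^{-1}$ since $\H_{-1}(\K_-)$ is concentrated at $J=\emptyset$, and the $q=0$ part is read off from the exact sequence $(\text{\ding{72}})$ of Theorem \ref{Truemain} together with the vanishing of $H^0$ and $H^1$ of $\H_0(\K_-)$ and the dichotomy on $\dim H^1(H_0(\K_-;\F))$. Your direct computation of $\ker d^1$ on $\F^m$ via the edge/non-edge equations is exactly the content of Lemma \ref{uber} and the Note following it (and, as you suspect, the splitting of $\phi_{2,0}$ is not actually needed for the dimension count --- exactness over a field suffices).
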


\section{Notation and preliminaries}
\subsection{Simplicial complexes}
\begin{defin}\label{simplicialcomplex}
    A simplicial complex $\K$ on an ordered set $S$ is a non-empty collection of subsets of $S$ such that whenever $\tau\subseteq\sigma\in \K$ we have that $\tau\in\K$. An element of cardinality $p+1$ of $\K$ is called a $p$-dimensional face of $\K$. 
\end{defin}
 \noindent Throughout this work, we only consider finite simplicial complexes without ghost vertices, that is, $\K$ includes every singleton of its vertex set. We'll usually denote the vertex set by $[m]:=\{x\in \Z:1\leq x\leq m\}$ with the order of the integers. 

\begin{defin}\label{Subcomplex}
     Let $\K$ be a simplicial complex on $[m]$ and $J\subseteq[m]$. We define the \textit{full subcomplex of $\K$ on $J$} as the simplicial complex on $J$ given by $\K_J:=\{\sigma\in\K:\sigma\subseteq J\}$.
\end{defin}
Our main result relies on the notion of neighbourliness, which measures how big the minimal non-faces of a simplicial complex are:
\begin{defin}\label{nei}
     For $p\in \Z$, a simplicial complex $\K$ is \textit{$p$-neighbourly} if $\sigma\in \K$ whenever $|\sigma|=p+1$. A simplicial complex is said to be \textit{neighbourly} if it is $1$-neighbourly. Equivalently, $\K$ is neighbourly if its 1-skeleton is the complete graph.
\end{defin}

\begin{exm}
    An important class of simplicial complexes is the cycles; let $m\in \Z$ be at least $3$, then $\mathcal{C}^m$ is the simplicial complex on $[m]$ whose maximal faces are $\{a,b\}$ where $|b-a|\equiv 1\text{ mod }m$. The only neighbourly cycle is the $3$-cycle since $\{1,3\}\notin C^{m}$ for $m>3$.
\end{exm}

 \subsection{Poset categories and cohomology} 
 
 \begin{defin}\label{posetcat}
     Given a poset $(P,\leq)$, we can define the category  $\text{Cat}(P,\leq)$ whose objects are the elements of $P$ and there's a unique map $a\to b$ for $a,b\in P$ if and only if $a\leq b$. When context suffices, we shall abuse notation and denote $\text{Cat}(P,\leq)$ simply as $P$. In particular, we'll consider posets of the form $(2^S, \subseteq)$ for some set $S$, that is, the subsets of $S$ ordered by inclusion. 
 \end{defin}

\begin{construction}\label{posethom}
    Consider an abelian category $\A$ and an integer $m$. For every functor $F:2^{[m]}\to \A$ we can define a cochain complex $C^*(F)$ as follows:\\
    \begin{itemize}
        \item The objects in the complex are given by \[C^l(F)=\bigoplus\limits_{\substack{J\subseteq[m]\\|J|=l}} F(J)\]

        \item The differential is given by 
        \[d(F)=\sum_{J\subseteq[m]}\sum_{x\in[m]-J}(-1)^{\varepsilon(J;x)}F\left(J\xhookrightarrow{}J\cup\{x\}\right)\]
        where $\varepsilon(J;x)=|\{j\in J:j<x\}|$.
    \end{itemize}
\end{construction}
\noindent We define the cohomology of $F$ as the cohomology of this complex and denote it as $H^*(F)$. This construction is explored in further generality in chapter 6 of \cite{chandler}.\\

For an abelian category $\A$ consider the category Fun$(2^{[m]},\A)$ of functors $2^{[m]}\to \A$ and natural transformations. This category is abelian with structure inherited from $\A$.\newpage
\begin{prop}
    Construction 2.6 defines an exact functor $C^*(-):\text{Fun}(2^{[m]},\A)\to \text{dg} \A$ into the category of differential graded objects of $\A$.
\end{prop}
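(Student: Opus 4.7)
The plan is to verify three things: (a) $d^2 = 0$ so that $C^*(F)$ is a genuine cochain complex; (b) a natural transformation $\eta : F \Rightarrow G$ yields a chain map $C^*(\eta) : C^*(F) \to C^*(G)$ and the assignment $F \mapsto C^*(F)$ respects identities and composition; and (c) the resulting functor is exact. Once (a) is established the remaining verifications are essentially formal, so the only real (if mild) obstacle is the sign bookkeeping for $d^2 = 0$.

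For (a), I fix $J \subseteq [m]$ and a pair of distinct elements $x, y \in [m] \setminus J$. The coefficient of the map $F(J) \to F(J \cup \{x,y\})$ appearing in $d^2|_{F(J)}$ comes from exactly two composites in the construction of $d$: the path that adds $x$ first and then $y$, and the path that adds $y$ first and then $x$. By functoriality of $F$ both composites equal the single morphism $F(J \hookrightarrow J \cup \{x,y\})$. Assuming $x < y$, their signs are $(-1)^{\varepsilon(J;x) + \varepsilon(J \cup \{x\}; y)}$ and $(-1)^{\varepsilon(J;y) + \varepsilon(J \cup \{y\}; x)}$ respectively; since $x < y$ we have $\varepsilon(J \cup \{x\}; y) = \varepsilon(J;y) + 1$ while $\varepsilon(J \cup \{y\}; x) = \varepsilon(J;x)$, so the two exponents differ by exactly $1$ and the two contributions cancel. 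Summing over unordered pairs $\{x,y\}$ gives $d^2 = 0$.

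For (b), I set $C^l(\eta) := \bigoplus_{|J| = l} \eta_J$. The naturality square for $\eta$ at each inclusion $J \hookrightarrow J \cup \{x\}$ is exactly what is needed for $C^*(\eta)$ to commute with the individual summands of $d$; preservation of identities and composition is immediate from the componentwise definition. For (c), exactness in both $\text{Fun}(2^{[m]}, \A)$ and $\text{dg}\A$ is tested objectwise, so it suffices to show each $C^l(-) = \bigoplus_{|J| = l} \text{ev}_J$ is exact. Each evaluation functor $\text{ev}_J : F \mapsto F(J)$ is exact because kernels, cokernels and images in a functor category valued in $\A$ are computed pointwise, and finite biproducts in an abelian category preserve exactness; this gives (c).
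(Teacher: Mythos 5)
Your proof is correct, and it is somewhat more self-contained than the paper's. The paper delegates both well-definedness and functoriality to Section 7 of the reference on poset homology and only argues exactness, doing so by an element chase: given $E\xrightarrow{\eta}F\xrightarrow{\mu}G$ exact, it takes $\sigma\in\ker C(\mu)$, notes that $\mu_J(\sigma)=0$ in each summand, and concludes $\sigma\in\operatorname{im}C(\eta)$ from pointwise exactness. You instead verify everything directly: your sign computation $\varepsilon(J\cup\{x\};y)=\varepsilon(J;y)+1$ and $\varepsilon(J\cup\{y\};x)=\varepsilon(J;x)$ for $x<y$ correctly gives $d^2=0$ (a point the paper never checks), naturality gives the chain maps, and your exactness argument is the structural version of the paper's: exactness in both $\text{Fun}(2^{[m]},\A)$ and $\text{dg}\A$ is checked objectwise/degreewise, and $C^l(-)$ is a finite biproduct of evaluation functors, each exact because limits and colimits in the functor category are computed pointwise. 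The two arguments have the same mathematical content, but yours has the minor advantage of not pretending a general abelian category has elements (the paper's ``$\sigma\in\ker C(\mu)$'' strictly speaking needs an embedding theorem or generalized elements to be literal), at the cost of a slightly longer write-up.
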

\begin{proof}
    Functoriality is shown in much more generality in Section 7 of \cite{chandler}. Let $E\xrightarrow{\eta}F\xrightarrow{\mu} G$ be exact in Fun$(2^{[m]},\A)$, we want to show that the sequence \[C(E)\xrightarrow{C(\eta)} C(F)\xrightarrow{C(\mu)}C(G)\] is exact, that is, that $\ker (C(\mu))\cong \im (C(\eta))$. Clearly $\im C(\eta)\subseteq \ker C(\mu)$ since by functoriality \[C(\mu)C(\eta)=C(\mu\eta)=C(0)=0.\]
    Let $\sigma\in \ker (C(\mu))$, then for every integer $l$ we have that $\mu_l(\sigma)=0$ and so $\sigma\in \ker \mu$. By assumption this means that $\sigma \in Im(\eta)$ and so $\sigma\in \im (C(\eta))$. As $\sigma$ was arbitrary it follows that $\ker (C(\mu))\subseteq \im (C(\eta))$ completing the proof.    
\end{proof}
\begin{exm}
    Let $\K$ be a simplicial complex and $F$ be the functor defined as follows:
    \begin{itemize}
        \item $F(\sigma)=\left\{\begin{array}{cl}
        \Z f_\sigma &  \text{if $\sigma\in\K$}\\
        0 & \text{else} 
    \end{array}\right.$
    where $f_\sigma$ is the indicator function of the face $\sigma$.
        \item Whenever $\sigma\in \K$ and $x\in \sigma$ \[F(\sigma\setminus\{x\}\subseteq\sigma)(f_\tau)=\left\{\begin{array}{cl}
            f_\sigma & \text{if }\tau=\sigma\setminus\{x\} \\
            0 & \text{else.}
        \end{array}\right.\]
    \end{itemize}
    this gives the reduced simplicial cochain complex, $C^l(F)\cong \tilde{C}^{l-1}(\K)$
\end{exm}

\begin{defin}
        For a simplicial complex $\K$ on $[m]$ and $q\in \Z$, we define the functor $H_{q}(\K_-):2^{[m]}\to \text{Ab}$ as one that maps $J\subseteq[m]$ to $H_q(\K_J)$ and maps the inclusion $J\subseteq L\subseteq[m]$ to the map induced by $\K_J\xhookrightarrow{}\K_L$ in homology. We define the functor $\H_q(\K_-):2^{[m]}\to \text{Ab}$ analogously.
\end{defin}
\begin{exm}
    Double (co)homology $DH_{*,*}(\ZZ_\K)$ is a bigraded functor on simplicial complexes designed in \cite{Limonchenko_2023} (denoted originally by $HH_{*,*}(\ZZ_\K)$) to solve a stability problem in topological data analysis. This is a special case of poset cohomology in the sense that
    \[DH_{-k,2l}(\ZZ_\K)\cong H^l(\H_{l-k-1}(\K_-))\]
    or equivalently
    \[DH_{q-l+1,2l}(\ZZ_\K)\cong H^l(\H_q(\K_-)).\]
    
\end{exm}
\begin{lem}\label{dh}
    $H^2(\H_0(\K_-))=0$ if and only if $\K$ is neighbourly.
\end{lem}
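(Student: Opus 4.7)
The plan is to analyse the cochain complex $C^*(\H_0(\K_-))$ in its lowest degrees. Since $\K_\emptyset=\emptyset$ and each $\K_{\{v\}}$ is a single point, one has $C^0(\H_0(\K_-))=C^1(\H_0(\K_-))=0$; in degree two, $\H_0(\K_{\{a,b\}})$ equals $\Z$ if $\{a,b\}\notin\K$ and vanishes otherwise. Consequently $H^2(\H_0(\K_-))=\ker d^2$, and $C^2(\H_0(\K_-))$ is free abelian on the non-edges of $\K$.

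The forward direction is then immediate: if $\K$ is neighbourly there are no non-edges, so $C^2(\H_0(\K_-))=0$ and $H^2$ vanishes. (In fact the whole functor $\H_0(\K_-)$ is zero, since every $\K_J$ with $J\neq\emptyset$ has complete $1$-skeleton, hence is connected.)

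For the converse I would exhibit an explicit nonzero cocycle when $\K$ is not neighbourly. The natural candidate is
\[\alpha := \sum_{\{a,b\}\notin\K}g_{ab}\in C^2(\H_0(\K_-)),\]
where $g_{ab}$ denotes the canonical generator $[b]-[a]$ of $\H_0(\K_{\{a,b\}})$. This element is visibly nonzero as soon as $\K$ has at least one non-edge, so since $H^2(\H_0(\K_-))=\ker d^2$ it suffices to show $d^2(\alpha)=0$. Rather than do a direct case analysis on the edge pattern of each triple $\{a<b<c\}$, I would lift $\alpha$ to a cocycle in the auxiliary functor $\tilde{C}_0(\K_-):J\mapsto\ker(\Z^J\twoheadrightarrow\Z)$ by taking $\tilde{\alpha}:=\sum_{\{a,b\}}(b-a)$. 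Quotienting by $1$-boundaries gives a natural surjection $\pi:\tilde{C}_0(\K_-)\twoheadrightarrow\H_0(\K_-)$ of functors, and since $\pi$ induces a chain map on the cochain complexes of Construction~\ref{posethom}, the image $\alpha=\pi(\tilde{\alpha})$ is automatically a cocycle as soon as $\tilde{\alpha}$ is.

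The main obstacle is therefore verifying that $\tilde{\alpha}$ is a cocycle in $C^2(\tilde{C}_0(\K_-))$. This reduces to checking, for each triple $\{a<b<c\}$, that the three signs $(-1)^{\varepsilon(\{a,b\};c)}$, $(-1)^{\varepsilon(\{a,c\};b)}$, $(-1)^{\varepsilon(\{b,c\};a)}$ produced by Construction~\ref{posethom} combine the three contributions $(b-a)$, $(c-a)$, $(c-b)$ into the trivial identity $(b-a)-(c-a)+(c-b)=0$ in $\tilde{C}_0(\K_{\{a,b,c\}})$. This is the only nontrivial sign computation of the proof, and it is a short direct check.
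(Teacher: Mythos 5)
Your proof is correct, but the backward direction is genuinely different from the paper's. You exploit that $C^0(\H_0(\K_-))=C^1(\H_0(\K_-))=0$, so $H^2(\H_0(\K_-))=\ker d^2$ sits inside the free abelian group on the non-edges, and you then produce an explicit nonzero cocycle $\alpha=\sum_{\{a,b\}\notin\K}([b]-[a])$; your lifting trick through the augmentation-kernel functor $J\mapsto\ker(\Z^J\to\Z)$ is sound (the quotient-by-boundaries map is a natural transformation, so it induces a chain map under Construction~\ref{posethom}), and the one remaining sign check does hold: for $a<b<c$ the signs are $+,-,+$, giving $(b-a)-(c-a)+(c-b)=0$, so $\tilde\alpha$ and hence $\alpha$ is a cocycle. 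The paper instead argues by restriction: it picks a single non-edge $\{a,b\}$, notes $H^2(\H_0((\K_{\{a,b\}})_-))\cong\Z$, and invokes the surjectivity of the restriction map $H^2(\H_0(\K_-))\to H^2(\H_0((\K_{\{a,b\}})_-))$ from Lemma~5.6 of \cite{dohosphere}. The trade-off: the paper's argument is shorter but leans on an external surjectivity result from double homology, while yours is self-contained and more explicit, identifying $H^2(\H_0(\K_-))$ as $\ker d^2$ and exhibiting a concrete nonvanishing class (and, in the neighbourly case, even showing the whole functor $\H_0(\K_-)$ vanishes, which is stronger than what the lemma needs).
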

\begin{proof}
    If $\K$ is neighbourly $\K_{\{a,b\}}$ is always contractible and so $C^2(\H_0(\K_-))=0$. If $\K$ isn't neighbourly, there's a pair of disconnected vertices $\{a,b\}$. We know that $H^2(\H_0(\K_{\{a,b\}})=\Z$, and by Lemma $5.6$ in \cite{dohosphere} there's a surjective map $H^2(\H_0(\K))\to H^2(\H_0(\K_{\{a,b\}})$, which means that the domain can't be zero. 
\end{proof}

\begin{exm}
    Überhomology is a triply-graded functor on simplicial complexes designed in \cite{Celoria} to tackle the total dominating set problem in graph theory from a categorical perspective. As noted after Definition 1.3 in \cite{uber}, degree zero überhomology is a special case of poset cohomology as follows:
    \[\ddot{B}_q^l(\ZZ_\K)\cong H^l(H_{q}(\K_{-})).\]
\end{exm}
\begin{lem}\label{uber}
    $H^1(H_0(\K_-))=0$ if and only if $\K$ isn't neighbourly.
\end{lem}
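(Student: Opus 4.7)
The plan is to compute $H^1(H_0(\K_-))$ directly from the cochain complex of Construction~\ref{posethom} in low degree. Because $\K_\emptyset$ has no vertices we have $H_0(\K_\emptyset)=0$, so $C^0(H_0(\K_-))=0$ and therefore $H^1(H_0(\K_-))=\ker d^1$. This reduces the statement to a finite linear-algebra computation in the first two nontrivial terms of the complex.

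Next I would unpack $C^1$, $C^2$, and the differential between them. Write $C^1=\bigoplus_{i\in[m]}\Z e_i$, where $e_i$ is the class of the unique connected component of $\K_{\{i\}}$. For a pair $J=\{a,b\}$ with $a<b$, the group $H_0(\K_J)$ is $\Z$ when $\{a,b\}\in\K$ and $\Z[a]\oplus\Z[b]$ when $\{a,b\}\notin\K$; the inclusions $\{a\},\{b\}\hookrightarrow\{a,b\}$ send $e_a,e_b$ to the unique generator in the first case and to $[a],[b]$ respectively in the second. Inserting the signs $(-1)^{\varepsilon(J;x)}$, the $\{a,b\}$-coordinate of $d^1(c_1e_1+\cdots+c_me_m)$ becomes $c_b-c_a$ when $\{a,b\}\in\K$ and $c_b[b]-c_a[a]$ when $\{a,b\}\notin\K$.

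A short case analysis then finishes the proof. If $\K$ is neighbourly, every pair contributes the equation $c_a=c_b$, so $\ker d^1$ is the diagonal copy of $\Z$ and $H^1(H_0(\K_-))\cong\Z\neq 0$. If $\K$ is not neighbourly, fix any non-edge $\{a,b\}$; the two coordinates at that pair force $c_a=c_b=0$, and for each remaining vertex $v$ the pair $\{v,a\}$ imposes either $c_v=0$ directly (when $\{v,a\}\notin\K$) or $c_v=c_a=0$ (when $\{v,a\}\in\K$), so $\ker d^1=0$. No step presents a serious obstacle: the only delicate point is keeping track of the signs and of the two possible shapes of $H_0(\K_J)$ for pairs, and the $\Z$ produced in the neighbourly case is precisely the group that will appear in the exact sequence~(\ding{72}) of Theorem~\ref{Truemain}.
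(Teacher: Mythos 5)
Your proposal is correct and follows essentially the same route as the paper: both reduce to computing $\ker d^1$ directly (using $C^0(H_0(\K_-))=0$), exhibit the diagonal class $\sum_i e_i$ as a nonzero kernel element in the neighbourly case, and show injectivity of $d^1$ in the non-neighbourly case. Your propagation argument from a fixed non-edge $\{a,b\}$ is a cleaner packaging of the paper's explicit echelon-form matrix, but the underlying linear algebra is identical.
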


\definecolor{bblue}{RGB}{0, 102, 204}
\begin{proof}
    Let $\K$ be neighbourly and let $J\subseteq[m]$ have $2$ elements, then $\K_J$ is contractible. Denote by $\sigma_J\in H_{0}(\K_J)$ the homology class of a vertex of $J$; if we denote by $d$ the differential of the poset $H_0(\K_-)$ we have that
\[d\left(\sum_{j\in[m]}\sigma_j\right)=\sum_{j\in[m]}d(\sigma_j)=\sum_{j\in[m]}\sum_{\substack{i\in[m]\\i\neq j}}(-1)^{\varepsilon(\{j\};i)}\sigma_{ij}=\sum_{\substack{i,j\in[m]\\i\neq j}}\pm(\sigma_{ij}-\sigma_{ji})=0.\]
Then $d^1$ has non-trivial kernel, and so $H^{1}(H_0(\K_-))\cong \ker d^1$ can't be zero.\\
    
    Now, let $\K$ be a non-neighbourly simplicial complex. Assume without loss of generality that $1$ isn't connected to every vertex. Further, assume that the set of vertices not-adjacent to $1$ is $[2,\lambda]$ for some $\lambda\geq 2$. Let $k\leq \lambda$, since $k$ is disconnected from $1$ we have that $H_0(\K_{\{1,k\}})\cong H_0(\K_{\{1\}})\oplus H_0(\K_{\{k\}})$.
    This identification (and the order inherited from the interval $[m]$) lets us choose a basis of $H_0(\K_{\{1,k\}})$ such that the restricted differential corresponds to the matrix
    $\begin{bNiceMatrix}
        \CodeBefore
        \begin{tikzpicture}
        \fill [blue!10,rounded corners] (1-|1) rectangle (3-|3);
        \end{tikzpicture}
        \Body
        -1 & 0\\
        0 & 1
    \end{bNiceMatrix}$. On the other hand, for $k>\lambda$, as it's connected to $1$, restricting the differential to the edge $\{1,k\}$ results in the matrix $\begin{bNiceMatrix}
        \CodeBefore
        \begin{tikzpicture}
        \fill [teal!10,rounded corners] (1-|1) rectangle (3-|3);
        \end{tikzpicture}
        \Body
        -1 & 1
    \end{bNiceMatrix}$. This tells us that the first $m+\lambda-2$ rows of a matrix pressentation of the differential of $H_0(\K_-)$ are as follows:
\[\phantom{}\;\;\;\;\;\;\;\;\;\;\;\;\:\;\;\;\;\;\;\begin{array}{cccccccc}
    1 & 2 & 3 & \cdots & \lambda &\lambda{+1} & \cdots & m
\end{array}\]
\[\begin{array}{c}
12\\13\\\vdots\\1\lambda\\1(\lambda{+1})\\\vdots\\1m\\\vdots\end{array}\begin{bNiceMatrix}[margin]  
\CodeBefore
  \begin{tikzpicture}
  \fill [blue!10,rounded corners] (1-|1) rectangle (5-|2) ;
  \fill [blue!10,rounded corners] (1-|2) rectangle (2-|3) ;
  \fill [blue!10,rounded corners] (2-|3) rectangle (3-|4) ;
  \fill [blue!10,rounded corners] (3-|4) rectangle (4-|5) ;
  \fill [blue!10,rounded corners] (4-|5) rectangle (5-|6) ;
  \fill [red!10,rounded corners]  (1-|6) rectangle (5-|9) ;
  \fill [red!10,rounded corners]  (5-|2) rectangle (8-|6) ;
  \fill [red!10,rounded corners]  (2-|2) rectangle (5-|3) ;
  \fill [red!10,rounded corners]  (3-|3) rectangle (5-|4) ;
  \fill [red!10,rounded corners]  (4-|4) rectangle (5-|5) ;
  \fill [red!10,rounded corners]  (1-|3) rectangle (2-|6) ;
  \fill [red!10,rounded corners]  (2-|4) rectangle (3-|6) ;
  \fill [red!10,rounded corners]  (3-|5) rectangle (4-|6) ;
  \fill [red!10,rounded corners]  (5-|7) rectangle (6-|9) ;
  \fill [red!10,rounded corners]  (6-|6) rectangle (8-|7) ;
  \fill [red!10,rounded corners]  (7-|7) rectangle (8-|8) ;
  \fill [red!10,rounded corners]  (6-|8) rectangle (7-|9) ;
  \fill [teal!10,rounded corners] (5-|6) rectangle (6-|7) ;
  \fill [teal!10,rounded corners] (5-|1) rectangle (8-|2) ;
  \fill [teal!10,rounded corners] (6-|7) rectangle (7-|8) ;
  \fill [teal!10,rounded corners] (7-|8) rectangle (8-|9) ;
  \end{tikzpicture}
\Body
\substack{-1\\0}    & \substack{0\\1}     & \substack{0\\0}     & \cdots & \substack{0\\0} \;\;   & \;\substack{0\\0}\;    & \cdots        & \substack{0\\0}    \\
\substack{-1\\0}    & \substack{0\\0}     & \substack{0\\1}     & \cdots & \substack{0\\0} \;\;    & \substack{0\\0}    & \cdots        & \substack{0\\0}    \\
\vdots & \vdots  & \vdots  & \ddots & \vdots\;\;  & \vdots & \ddots        & \vdots \\
\substack{-1\\0}    & \substack{0\\0}     & \substack{0\\0}     & \cdots & \substack{0\\1}  \;\;   & \substack{0\\0}    & \cdots        & \substack{0\\0}    \\
-1     & 0       & 0       & \cdots & 0   \;\;    & 1       &        \cdots       &  0      \\
\vdots & \vdots  & \vdots  & \ddots & \vdots \;\; &    \vdots    & \ddots        &    \vdots\\
-1     & 0       & 0       & \cdots & 0     \;\;  &       0 & \cdots               &    1    \\
\vdots & \vdots  & \vdots  & \vdots & \vdots\;\;  & \vdots & \vdots        & \vdots
\end{bNiceMatrix}\]
\phantom{}\\

This matrix is in echelon form with $m$ pivots, meaning it has full rank and is therefore injective. This lets us conclude that $H^1(H_0(\K_-))=0$.
\end{proof}
\begin{note}
    In fact, as we'll see in Theorem 3.2, $H^1(H_0(\K_-))\cong \Z$ whenever $\K$ is neighbourly.
\end{note}

The following lemma was initially proved in \cite[\S3.4]{jonespoly} as Proposition 4 in the context of commuting cubes.

\begin{prop}
    Let $F:2^{[m]}\to \A$. If there's $x\in[m]$ such that for every $J\subseteq [m]\setminus\{x\}$, $F(J\subseteq J\{x\})$ is an isomorphism then $H(F)=0$.
\end{prop}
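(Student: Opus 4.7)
The plan is to produce an explicit chain contraction $h\colon C^*(F)\to C^{*-1}(F)$ satisfying $dh+hd=\mathrm{id}_{C^*(F)}$; this immediately forces $H(F)=0$. The hypothesis supplies, for every $J\subseteq[m]\setminus\{x\}$, a distinguished inverse $F(J\subseteq J\cup\{x\})^{-1}\colon F(J\cup\{x\})\to F(J)$, and this is exactly the data out of which $h$ will be built.

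First, I would decompose the underlying graded object as $C^*(F)=C^*_x\oplus C^*_{\bar x}$, where $C^*_x=\bigoplus_{J\ni x}F(J)$ and $C^*_{\bar x}=\bigoplus_{J\not\ni x}F(J)$. Under this splitting the differential $d$ decomposes into two ``internal'' pieces (those adding some $y\neq x$) together with a ``crossing'' map $d_{\bar x,x}\colon C^*_{\bar x}\to C^*_x$ that adds $x$; by hypothesis, $d_{\bar x,x}$ is a direct sum of isomorphisms, up to the signs $(-1)^{\varepsilon(J;x)}$. I would then define
\[
h\big|_{F(J)} = \begin{cases} (-1)^{\varepsilon(J\setminus\{x\};x)}\,F(J\setminus\{x\}\subseteq J)^{-1} & \text{if } x\in J, \\ 0 & \text{otherwise,} \end{cases}
\]
so that, on each summand where it is nonzero, $h$ is a signed componentwise inverse of $d_{\bar x,x}$ pointing in the opposite direction.

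The verification of $dh+hd=\mathrm{id}$ then proceeds summand by summand. On $F(J)$ with $x\notin J$, $dh$ vanishes, and in $hd(\alpha)$ only the term $y=x$ of the differential contributes (its target is the unique face containing $x$), returning $\alpha$ once the two $(-1)^{\varepsilon(J;x)}$ factors cancel. On $F(J)$ with $x\in J$, the term coming from $(J\setminus\{x\})\subseteq J$ in $d\circ h$ again returns $\alpha$, while each $y\in[m]\setminus J$ indexes a single cross-term in $dh(\alpha)$ and a single cross-term in $hd(\alpha)$; functoriality of $F$ on the commuting square with vertices $J\setminus\{x\}$, $J$, $(J\setminus\{x\})\cup\{y\}$, $J\cup\{y\}$ identifies these two contributions up to sign, and the parity identity
\[\varepsilon(J;y)+\varepsilon\bigl((J\setminus\{x\})\cup\{y\};x\bigr)\equiv\varepsilon(J\setminus\{x\};x)+\varepsilon(J\setminus\{x\};y)+1\pmod{2}\]
(which follows by a short case split on whether $y<x$ or $y>x$) ensures they carry opposite signs and cancel.

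The only real obstacle is the sign bookkeeping in this last step; conceptually the argument is the standard ``invert one direction of a commuting cube'' trick (essentially as used in \cite{jonespoly}), so once $h$ has been written down correctly no new ideas are required.
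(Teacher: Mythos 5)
Your proof is correct. The paper itself gives no argument for this proposition --- it simply cites Proposition 4 of \cite{jonespoly} --- and the explicit contracting homotopy you construct (a signed componentwise inverse of the edge maps in the $x$-direction) is exactly the standard argument from that source; your sign identity checks out, since $\varepsilon(J;y)=\varepsilon(J\setminus\{x\};y)+[x<y]$ and $\varepsilon((J\setminus\{x\})\cup\{y\};x)=\varepsilon(J\setminus\{x\};x)+[y<x]$ with $[x<y]+[y<x]=1$. The only point worth making explicit is that rewriting the cross-term of $hd$ as a cross-term of $dh$ uses the invertibility of $F\bigl((J\setminus\{x\})\cup\{y\}\subseteq J\cup\{y\}\bigr)$, which the hypothesis does supply since $(J\setminus\{x\})\cup\{y\}\subseteq[m]\setminus\{x\}$.
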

An extreme case of this is the following corollary, which will be useful in the proof of our main result.
\begin{cor}\label{const}
    For any $X\in\A$, the constant functor $\Delta(X):2^{[m]}\to \A$ is acyclic.
\end{cor}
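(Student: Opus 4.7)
The plan is to invoke the preceding Proposition verbatim, treating this corollary as the degenerate case in which \emph{every} inclusion gives rise to an isomorphism. Concretely, by the definition of a constant functor, $\Delta(X)$ sends every object $J\subseteq[m]$ to the same object $X\in\A$ and every morphism (i.e.\ every inclusion $J\subseteq L$) to $\mathrm{id}_X$. In particular, for an arbitrarily chosen $x\in[m]$ and any $J\subseteq[m]\setminus\{x\}$, the structure map
\[
\Delta(X)\bigl(J\subseteq J\cup\{x\}\bigr)=\mathrm{id}_X
\]
is an isomorphism, which is exactly the hypothesis demanded by the Proposition with this choice of $x$. Applying the Proposition therefore yields $H(\Delta(X))=0$, as claimed.

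No genuine obstacle arises beyond unpacking the definitions, so I do not anticipate any technical difficulty. The one implicit point worth flagging is that the argument requires $[m]$ to be nonempty, so that a coordinate $x$ is available; this is consistent with the standing convention in the paper that simplicial complexes contain every singleton of their vertex set, so in all situations where Corollary \ref{const} is invoked we may safely assume $m\geq 1$.
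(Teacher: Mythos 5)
Your proof is correct and matches the paper's intent exactly: the paper presents this corollary as "an extreme case" of the preceding Proposition, with no further argument, and your verification that every structure map of $\Delta(X)$ is $\mathrm{id}_X$ is precisely the unpacking required. The remark about needing $m\geq 1$ is a reasonable point to flag and is consistent with the paper's conventions.
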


\section{Result}
   \begin{thm}\label{Truemain}
       Let $\K$ be a simplicial complex on $[m]$. There are maps $\phi_{l,q}:H^l(\H_q(\K_-)) \to H^l(H_q(\K_-))$ for every $q,l\in \Z$ which are isomorphisms whenever $l>2$ or $q>0$, furthermore, we have the following exact sequence 
       \begin{equation}\label{mainseq}
           0\to H^1(H_0(\K_-))\xrightarrow[]{}\Z\to H^2(\H_0(\K_-))\xrightarrow{\phi_{2,0}} H^2(H_0(\K_-))\to 0\tag{\ding{72}}       \end{equation}
       where $\phi_{2,0}$ has a section.
   \end{thm}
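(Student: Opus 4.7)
The plan is to construct a short exact sequence of functors relating reduced and unreduced homology, and then deduce the theorem from the induced long exact sequence in poset cohomology. For $q > 0$ reduced and unreduced homology agree on every $\K_J$, so the functors $\H_q(\K_-)$ and $H_q(\K_-)$ are canonically identified and I take $\phi_{l,q} = \mathrm{id}$, which is obviously an isomorphism.

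The content is the case $q = 0$. Let $\iota : \H_0(\K_-) \to H_0(\K_-)$ be the pointwise inclusion of the kernel of the augmentation and let $Q = \mathrm{coker}(\iota)$; explicitly $Q(\emptyset) = 0$, $Q(J) = \Z$ for $J \neq \emptyset$, and $Q$ sends every non-trivial inclusion to $\mathrm{id}_\Z$. This produces a short exact sequence of functors
\[
0 \to \H_0(\K_-) \xrightarrow{\iota} H_0(\K_-) \to Q \to 0,
\]
and I define $\phi_{l,0}$ to be the map on $H^l$ induced by $\iota$. To compute $H^*(Q)$, I embed $Q$ into the constant functor $\Delta(\Z)$ (the inclusion is the identity on each non-empty $J$); the quotient $R$ is supported on $\emptyset$ with $R(\emptyset) = \Z$, so $H^*(R) = \Z$ concentrated in degree zero while $H^*(\Delta(\Z)) = 0$ by Corollary \ref{const}. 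The long exact sequence then yields $H^1(Q) \cong \Z$ and $H^l(Q) = 0$ for $l \neq 1$. Plugging this into the long exact sequence of the first short exact sequence, and using that $C^0(\H_0(\K_-)) = 0$ and $C^1(\H_0(\K_-)) = \bigoplus_j \H_0(\K_{\{j\}}) = 0$, I obtain isomorphisms $\phi_{l,0}$ for $l > 2$ together with the exact sequence (\ref{mainseq}).

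The only subtle point is producing a section of $\phi_{2,0}$. I trace the connecting homomorphism $\delta : H^1(Q) \to H^2(\H_0(\K_-))$ explicitly: a generator of $H^1(Q) \cong \Z$ lifts to the cochain $\sum_{j=1}^m \sigma_j \in C^1(H_0(\K_-))$, where $\sigma_j$ denotes the canonical generator of $H_0(\K_{\{j\}}) \cong \Z$, and applying the poset differential followed by the projection to the reduced part identifies $\delta(1) \in C^2(\H_0(\K_-)) = \bigoplus_{j<x,\, \{j,x\} \notin \K} \Z$ as an element whose coordinate at each disconnected pair is $\pm 1$. Because $C^1(\H_0(\K_-)) = 0$, the group $H^2(\H_0(\K_-))$ embeds into the free abelian group $C^2(\H_0(\K_-))$; an element whose every coordinate is a unit is indivisible in $C^2(\H_0(\K_-))$, hence indivisible in the subgroup $H^2(\H_0(\K_-))$. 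Therefore $\mathrm{im}(\delta) = \Z \cdot \delta(1)$ is a direct summand of $H^2(\H_0(\K_-))$, which splits the sequence $0 \to \mathrm{im}(\delta) \to H^2(\H_0(\K_-)) \xrightarrow{\phi_{2,0}} H^2(H_0(\K_-)) \to 0$ and produces the required section. The main technical step is the explicit computation of $\delta(1)$; once this is done, the primitivity argument is immediate and settles both the neighbourly case (where $\delta(1)$ vanishes trivially) and the non-neighbourly case uniformly.
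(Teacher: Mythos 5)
Your proof is correct, and its skeleton is the same as the paper's: the short exact sequence of functors $0\to \H_0(\K_-)\to H_0(\K_-)\to Q\to 0$ (your $Q$ is the paper's $A$), the computation $H^*(Q)\cong\Z$ concentrated in degree $1$ via acyclicity of $\Delta(\Z)$, and the resulting long exact sequence together with the vanishing of $C^0$ and $C^1$ of $\H_0(\K_-)$. Where you genuinely diverge is the section of $\phi_{2,0}$. The paper asserts that the section is ``obtained by assembling the sections of the inclusions $\H_0(\K_J)\hookrightarrow H_0(\K_J)$''; note that pointwise retractions $r_J:H_0(\K_J)\to\H_0(\K_J)$ cannot assemble into a natural transformation in general, since a natural retraction would make $\phi_{2,0}$ split injective, contradicting the exact sequence (\ref{mainseq}) when $\K$ is not neighbourly, so that argument requires some care at the cochain level. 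Your route -- computing $\delta(1)$ explicitly as the element of $C^2(\H_0(\K_-))=\bigoplus_{\{j,x\}\notin\K}\Z$ with unit coordinates, observing that $H^2(\H_0(\K_-))$ sits inside the finitely generated free abelian group $C^2(\H_0(\K_-))$ because $C^1(\H_0(\K_-))=0$, and concluding that the indivisible element $\delta(1)$ generates a direct summand -- is self-contained and avoids this naturality issue entirely. The price is a short explicit sign computation, which you carry out correctly ($-[\sigma_j]+[\sigma_x]$ at each disconnected pair $j<x$), and the payoff is a splitting argument that works uniformly in both the neighbourly and non-neighbourly cases.
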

   \begin{proof}
       For $q>0$, the result is direct since $\H_q(-)=H_q(-)$, so we only need to show it for $q=0$.\\
       Consider the functor $A:2^{[m]}\to \Z-mod$ given by $A(J)=\left\{\begin{array}{cl}
        \Z & \text{if $J\neq \emptyset$} \\
         0 & \text{else}
    \end{array}\right.$ 
    such that $A(J\xhookrightarrow{}J\cup\{x\})=1$ whenever $J\neq\emptyset$. As $A$ is obtained by taking the constant functor $\Delta(\Z)$ and removing the empty term. By Corollary \ref{const} $H^l(A)=0$ for $l\neq 1$; now, notice that since $\Delta(\Z)$ is acyclic we have that
    \[H^1(A)=\ker d^1_A\cong\ker d_{\Delta(\Z)}^1\cong\text{im }d^0_{\Delta(\Z)}\cong\Z.\]
    
    For every non-empty $J\subseteq[m]$, we have the following short exact sequence
    \[0\to \H_0(\K_J)\to H_0(\K_J)\to \Z\to 0\]

    One can verify this sequence induces a short exact sequence of functors
    \[0\to \H_0(\K_-)\xhookrightarrow{} H_0(\K_-)\to A\to 0,\]
    which in turn, induces a long exact sequence in cohomology. As $A$ is acyclic outside of degree $1$ we have that $H^l(\H_0(\K_-))\cong H^l(H_0(\K_-))$ for $l>2$. Therefore, the only non-trivial part of the sequence is  
    \[0\to H^1(H_0(\K_-))\xrightarrow{} \Z\xrightarrow{} H^2(\H_0(\K_-))\xrightarrow{\phi_{2,0}} H^2(H_0(\K_-))\to 0 \]
    which is the sequence we were looking for. The section of $\phi_{2,0}$ is obtained by assembling the sections of the inclusions $\H_0(\K_J)\xhookrightarrow{}H_0(\K_J)$.   
   \end{proof}
    The previous result presents the difference between double homology and überhomology almost entirely.
   \begin{itemize}
       \item The first part, that is, the isomorphism $H^l(\H_q(\K_J))\xrightarrow{\cong} H^{l}(H_q(\K))$ for $q>0$ or $l>2$ was originaly shown as the main result of \cite{uber}. 
       
       \item For the case $q=-1$, it's straightforward to see that $H_{-1}(\K_-)=0$ and $\H_{-1}(\K_J)=\Z\neq0$ only when $J=\emptyset$, meaning that $H^*(H_{-1}(\K_-))=0$ and $H^*(\H_{-1}(\K
       _-))\cong\Z$ concentrated in degree 0.
       
       \item As for the case $q=0$, Theorem $\ref{Truemain}$ gives us the following two cases:
    \begin{itemize}
        \item [(i)] If $\K$ is neighbourly, then by Lemma \ref{dh} $H^2(\H_0(\K_-))=0$ and by Theorem \ref{Truemain} it follows that $H^1(H_0(\K_-))\cong\Z$ and $H^2(H_0(\K_-))=0$.

        \item [(ii)] If $\K$ isn't neighbourly, then by Lemma \ref{uber} $H^1(H_0(\K_-))=0$ and so we have the short exact sequence 
        \[0\to \Z\to H^2(\H_0(\K_-))\xrightarrow{\phi_{2,0}} H^2(H_0(\K_-))\to 0\]
        
        and since $\phi_{2,0}$ has a section it follows that $H^2(\H_0(\K_-))\cong \Z\oplus H^2(H_0(\K_-))$ 
    \end{itemize}
   \end{itemize}
    
    If we only care about the difference in Betti numbers, we can express the complete difference as the following theorem using Poincaré series.
\begin{defin}\label{Lau}
    Let $\F$ be a field, for a sequence of functors $A=(A_j:2^{[m]}\to \text{vec}(\F))_{j\in \Z}$ we define the \textit{bigraded (Laurent) Poincaré series} $P(A)\in {\Z[[x,x^{-1},y]]}$ where the coefficient of $x^qy^l$ is dim $H^l(A_q)$. 
\end{defin}
\begin{thm}\label{poly}
    Let $\K$ be a simplicial complex on $[m]$ and $\F$ a field, then 
    \[\left(P(\H_*(\K_-;\F))-P(H_*(\K_-;\F))\right)(x,y)=\left\{\begin{array}{cl}
         x^{-1}-y& \text{if $\K$ is neighbourly},  \\
         x^{-1}+y^2&  \text{otherwise.}
    \end{array}\right.\]
\end{thm}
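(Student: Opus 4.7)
The plan is to deduce Theorem \ref{poly} from a bidegree-by-bidegree accounting using Theorem \ref{Truemain} together with Lemmas \ref{dh} and \ref{uber}. The coefficient of $x^q y^l$ in $P(\H_*(\K_-;\F))-P(H_*(\K_-;\F))$ is $\dim H^l(\H_q(\K_-;\F)) - \dim H^l(H_q(\K_-;\F))$, so I only need to pin down the bidegrees where these dimensions disagree. By Theorem \ref{Truemain} they agree whenever $q>0$ or $l>2$; for $q\le-2$ both functors vanish identically; hence only the strips $q=-1$ and $q=0$ with $0\le l\le 2$ can contribute.

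For the $q=-1$ strip I would read off directly from Construction \ref{posethom}: $\H_{-1}(\K_J)=\F$ exactly when $J=\emptyset$ and vanishes otherwise, while $H_{-1}(\K_J)=0$ for every $J$. Thus $C^*(\H_{-1}(\K_-))$ is concentrated in cochain degree zero, giving $H^0(\H_{-1}(\K_-))\cong\F$ and zero elsewhere, which is a contribution of $x^{-1}$ to the difference regardless of $\K$.

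For the $q=0$ strip I would use that $C^0(\H_0(\K_-))$ and $C^0(H_0(\K_-))$ both vanish (since the empty complex has zero $H_0$ and $\H_0$), and that $C^1(\H_0(\K_-))$ vanishes because $\K_{\{j\}}$ is a single point (no ghost vertices), whose reduced $0$-th homology is zero. Therefore the only possible discrepancies at $q=0$ live at $l=1$ and $l=2$, and the exact sequence (\ref{mainseq}) (with $\Z$ replaced by $\F$) reads
\[0 \to H^1(H_0(\K_-)) \to \F \to H^2(\H_0(\K_-)) \xrightarrow{\phi_{2,0}} H^2(H_0(\K_-)) \to 0.\]
Now I would split into cases. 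If $\K$ is neighbourly, Lemma \ref{dh} gives $H^2(\H_0(\K_-))=0$, so exactness forces $H^1(H_0(\K_-))\cong\F$ and $H^2(H_0(\K_-))=0$, contributing $-y$. If $\K$ is not neighbourly, Lemma \ref{uber} gives $H^1(H_0(\K_-))=0$, and the section of $\phi_{2,0}$ produces $H^2(\H_0(\K_-))\cong\F\oplus H^2(H_0(\K_-))$, contributing $y^2$.

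Adding the $x^{-1}$ from the $q=-1$ strip to the case-dependent $q=0$ contribution yields the two claimed formulas. There is no real obstacle here: Theorem \ref{Truemain} together with Lemmas \ref{dh} and \ref{uber} already pin down the exceptional bidegrees exactly, so the proof reduces to a transparent computation at $q=-1$ and a two-case analysis of the short exact sequence at $q=0$.
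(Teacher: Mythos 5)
Your proposal is correct and follows essentially the same route as the paper, which establishes Theorem \ref{poly} via the itemized discussion after Theorem \ref{Truemain}: the $q=-1$ strip contributes $x^{-1}$, and the $q=0$ strip splits into the neighbourly case ($-y$, via Lemma \ref{dh} and exactness of (\ref{mainseq})) and the non-neighbourly case ($+y^2$, via Lemma \ref{uber} and the section of $\phi_{2,0}$). Your additional checks that $C^0$ and $C^1$ of $\H_0(\K_-)$ vanish, and that the coefficient-change from $\Z$ to $\F$ is harmless, are welcome explicit details that the paper leaves implicit.
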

\bibliographystyle{alpha}
\bibliography{ref.bib}
\end{document}